\documentclass[twoside]{article}

%%%%%%% begin rkn
%\usepackage[numbers]{natbib}
\usepackage[round]{natbib}
\usepackage{amsmath,amssymb,amsthm,natbib,pgf,caption,subcaption,placeins}
\usepackage{macros}
\newtheorem{theorem}{Theorem}

\newtheorem{assumption}[theorem]{Assumption}
\newtheorem{lemma}[theorem]{Lemma}

% For algorithms
\usepackage{algorithm}
\usepackage[noend]{algpseudocode}

%%%%%%% end rkn

\usepackage[accepted]{aistats2016}

% If your paper is accepted, change the options for the package
% aistats2016 as follows:
%
%\usepackage[accepted]{aistats2016}
%
% This option will print headings for the title of your paper and
% headings for the authors names, plus a copyright note at the end of
% the first column of the first page.

\begin{document}

% If your paper is accepted and the title of your paper is very long,
% the style will print as headings an error message. Use the following
% command to supply a shorter title of your paper so that it can be
% used as headings.
%
%\runningtitle{I use this title instead because the last one was very long}

% If your paper is accepted and the number of authors is large, the
% style will print as headings an error message. Use the following
% command to supply a shorter version of the authors names so that
% they can be used as headings (for example, use only the surnames)
%
%\runningauthor{Surname 1, Surname 2, Surname 3, ...., Surname n}

\twocolumn[

\aistatstitle{A Linearly-Convergent Stochastic L-BFGS Algorithm}

\aistatsauthor{ Philipp Moritz \And Robert Nishihara \And Michael I. Jordan }

\aistatsaddress{ University of California, Berkeley \\ \texttt{pcmoritz@eecs.berkeley.edu} \And University of California, Berkeley \\ \texttt{rkn@eecs.berkeley.edu} \And University of California, Berkeley \\ \texttt{jordan@eecs.berkeley.edu}} ]

\begin{abstract}
We propose a new stochastic L-BFGS algorithm and prove a linear convergence rate for strongly convex and smooth functions. 
Our algorithm draws heavily from a recent stochastic variant of L-BFGS proposed in \citet{byrd2014stochastic} as well as a recent approach to variance reduction for stochastic gradient descent from \citet{johnson2013accelerating}. 
We demonstrate experimentally that our algorithm performs well on large-scale convex and non-convex optimization problems, exhibiting linear convergence and rapidly solving the optimization problems to high levels of precision. 
Furthermore, we show that our algorithm performs well for a wide-range of step sizes, often differing by several orders of magnitude. 
\end{abstract}

\section{Introduction}
A trend in machine learning has been toward using more parameters to model larger datasets. 
As a consequence, it is important to design optimization algorithms for these large-scale problems. 
A typical optimization problem arising in this setting is empirical risk minimization. 
That is,
\begin{equation} \label{eq:erm}
  \min_w \frac{1}{N}\sum_{i=1}^N f_i(w) ,
\end{equation}
where~$w \in \mathbb R^d$ may specify the parameters of a machine learning model, and~$f_i(w)$ quantifies how well the model~$w$ fits the~$i$th data point. 
Two challenges arise when attempting to solve \eqref{eq:erm}. 
First,~$d$ may be extremely large. 
Second,~$N$ may be extremely large. 

When~$d$ is small, Newton's method is often the algorithm of choice due to its rapid convergence (both in theory and in practice). 
However, Newton's method requires the computation and inversion of the Hessian matrix~$\nabla^2 f(w)$, which may be computationally too expensive in high dimensions. 
As a consequence, practitioners are often limited to using first-order methods which only compute gradients of the objective, requiring~$O(d)$ computation per iteration. 
The gradient method is the simplest example of a first-order method, but much work has been done to design quasi-Newton methods which incorporate information about the curvature of the objective without ever computing second derivatives. 
L-BFGS \citep{liu1989limited}, the limited-memory version of the classic BFGS algorithm, is one of the most successful algorithms in this space. 
Inexact Newton methods are another approach to using second order information for large-scale optimization. They approximately invert the Hessian in~$O(d)$ steps. 
This can be done by using a constant number of iterations of the conjugate gradient method \citep{dembo1982inexact,dembo1983truncated,nocedal2006numerical}. 

When~$N$ is large, batch algorithms such as the gradient method, which compute the gradient of the full objective at every iteration, are slowed down by the fact that they have to process every data point before updating the model. 
Stochastic optimization algorithms get around this problem by updating the model~$w$ after processing only a small subset of the data, allowing them to make much progress in the time that it takes the gradient method to make a single step. 

For many machine learning problems, where both~$d$ and~$N$ are large, stochastic gradient descent (SGD) and its variants are the most widely used algorithms \citep{robbins1951stochastic,bottou2010large,bottou2004large}, often because they are some of the few algorithms that can realistically be applied in this setting. 

Given this context, much research in optimization has been directed toward designing better stochastic first-order algorithms. 
For a partial list, see \citep{kingma2015adam,sutskever2013importance,duchi2011adaptive,shalev2013stochastic,johnson2013accelerating,roux2012stochastic,wang2013variance,nesterov2009primal,frostig2015competing,agarwal2014reliable}. 
In particular, much progress has gone toward designing stochastic variants of L-BFGS \citep{mokhtari2014global,wang2014stochastic,byrd2014stochastic,bordes2009sgd,schraudolph2007stochastic,sohl2014fast}. 

Unlike gradient descent, L-BFGS does not immediately lend itself to a stochastic version. 
The updates in the stochastic gradient method average together to produce a downhill direction in expectation. 
However, as pointed out in \citet{byrd2014stochastic}, the updates used in L-BFGS to construct the inverse Hessian approximation overwrite one another instead of averaging. 
Our algorithm addresses this problem in the same ways as \citet{byrd2014stochastic}, by computing Hessian vector products formed from larger minibatches. 

Though stochastic methods often make rapid progress early on, the variance of the estimates of the gradient slow their convergence near the optimum. 
To illustrate this phenomenon, even if SGD is initialized at the optimum, it will immediately move to a point with a worse objective value. 
For this reason, convergence guarantees typically require diminishing step sizes. 
One promising line of work involves speeding up the convergence of stochastic first-order methods by reducing the variance of the gradient estimates \citep{johnson2013accelerating,roux2012stochastic,defazio2014saga,shalev2013stochastic}. 

We introduce a stochastic variant of L-BFGS that incorporates the idea of variance reduction and has two desirable features. 
First, it obtains a guaranteed linear rate of convergence in the strongly-convex case. 
In particular, it does not require a diminishing step size in order to guarantee convergence (as partially evidenced by the fact that if our algorithm is initialized at the optimum it will stay there). 
Second, it performs very well on large-scale optimization problems, exhibiting a qualitatively linear rate of convergence in practice. 

\section{The Algorithm}

We consider the problem of minimizing the function
\begin{equation}
  f(w) = \frac{1}{N} \sum_{i=1}^N f_i(w) 
\end{equation}
over~$w \in \mathbb R^d$. 
For a subset~$\mathcal S \subseteq \{1,\ldots,N\}$, we define the subsampled function~$f_{\mathcal S}$ by
\begin{equation}
  f_{\mathcal S}(w) = \frac{1}{|\mathcal S|} \sum_{i \in \mathcal S} f_i(w)  .
\end{equation}

Our updates will use stochastic estimates of the gradient~$\nabla f_{\mathcal S}$ as well as stochastic approximations to the inverse Hessian~$\nabla^2 f_{\mathcal T}$. 
Following \citet{byrd2014stochastic}, we use distinct subsets~$\mathcal S,\mathcal T \subseteq \{1,\ldots,N\}$ in order to decouple the estimation of the gradient from the estimation of the Hessian. 
We let~$b=|\mathcal S|$ and~$b_H=|\mathcal T|$. 

Following \citet{johnson2013accelerating}, we occasionally compute full gradients, which we use to reduce the variance of our stochastic gradient estimates.

The update rule for our algorithm will take the form
\begin{equation*}
  w_{k+1} = w_k - \eta_k H_k v_k .
\end{equation*}
In the gradient method,~$H_k$ is the identity matrix. In Newton's method, it is the inverse Hessian~$(\nabla^2f(w_k))^{-1}$. 
In our algorithm, as in L-BFGS,~$H_k$ will be an approximation to the inverse Hessian. 
Instead of the usual stochastic estimate of the gradient,~$v_k$ will be a stochastic estimate of the gradient with reduced variance. 

Code for our algorithm is given in \algoref{alg:slbfgs}. 
Our algorithm is specified by several parameters. 
It requires a step size~$\eta$, a memory size~$M$, and positive integers~$m$ and~$L$. 
Every~$m$ iterations, the algorithm performs a full gradient computation, which it uses to reduce the variance of the stochastic gradient estimates. 
Every~$L$ iterations, the algorithm updates the inverse Hessian approximation. 
The vector~$s_r$ records the average direction in which the algorithm has made progress over the past~$2L$ iterations. 
The vector~$y_r$ is obtained by multiplying~$s_r$ by a stochastic estimate of the Hessian. 
Note that this differs from the usual L-BFGS algorithm, which produces~$y_r$ by taking the difference between successive gradients. 
We find that this approach works better in the stochastic setting. 
The inverse Hessian approximation~$H_r$ is defined from the pairs~$(s_j,y_j)$ for~$r-M+1 \le j \le r$ using the standard L-BFGS update rule, which is described in \secref{sec:construction_of_inverse_hessian}. 
The user must also choose batch sizes~$b$ and~$b_H$ from which to construct the stochastic gradient and stochastic Hessian estimates. 

\begin{algorithm*}
  \caption{Stochastic L-BFGS}
  \label{alg:slbfgs}
\begin{algorithmic}[1]
  \Require initial state~$w_0$, parameters~$m$,~$M$, and~$L$, batch sizes~$b$ and~$b_H$, and step size~$\eta$
  \State Initialize~$r=0$
  \State Initialize~$H_0=I$
  \For{$k = 0, \ldots$}
  \State Compute a full gradient~$\mu_k = \nabla f(w_k)$
  \State Set~$x_0=w_k$
  \For{$t = 0, \ldots, m-1$}
  \State Sample a minibatch~$\mathcal S_{k,t} \subseteq \{1,\ldots,N\}$
  \State Compute a stochastic gradient~$\nabla f_{\mathcal S_{k,t}}(x_t)$
  \State Compute a variance reduced gradient~$v_t=\nabla f_{\mathcal S_{k,t}}(x_t) - \nabla f_{\mathcal S_{k,t}}(w_k) + \mu_k$
  \State Set $x_{t+1} = x_t - \eta H_r v_t$
  \If{$t \equiv 0 \bmod L$}
  \State Increment $r \leftarrow r+1$
  \State Set~$u_r=\frac{1}{L} \sum_{j=t-L}^{t-1} x_j$
  \State Sample~$\mathcal T_r \subseteq \{1,\ldots,N\}$ to define the stochastic approximation $\nabla^2 f_{\mathcal T_r}(u_r)$
  \State Compute~$s_r = u_r-u_{r-1}$ 
  \State Compute~$y_r=\nabla^2 f_{\mathcal T_r}(u_r)s_r$
  \State Define~$H_r$ as in \secref{sec:construction_of_inverse_hessian}
  \EndIf
  \EndFor
  \State Set~$w_{k+1} = x_i$ for randomly chosen~$i \in \{0,\ldots,m-1\}$
  \EndFor
\end{algorithmic}
\end{algorithm*}

In \algoref{alg:slbfgs} and below, we use~$I$ to refer to the identity matrix. 
We use~$\mathcal F_{k,t}$ to denote the sigma algebra generated by the random variables introduced up to the time when the iteration counters~$k$ and~$t$ have the specified values. 
That is, 
%\begin{equation*}
%  \mathcal F_{k,t} = \sigma(\{ \mathcal S_{k',t'} \,:\, k' < k \,\, \text{or} \,\, k' = k \,\, \text{and} \,\, t' < t \} \cup \{ \mathcal T_r : r L \le mk + t \}) .
%\end{equation*}
%\begin{align*}
%  \mathcal F_{k,t} & = \sigma(\{ \mathcal S_{k',t'} \,:\, k' < k \,\, \text{or} \,\, k' = k \,\, \text{and} \,\, t' < t \} \\
%  & \qquad \cup \{ \mathcal T_r : r L \le mk + t \}) .
%\end{align*}
\begin{equation*}
  \mathcal F_{k,t} = \sigma\left(
  \begin{array}{c} \{ \mathcal S_{k',t'} \,:\, k' < k \,\, \text{or} \,\, k' = k \,\, \text{and} \,\, t' < t \} \\
    \cup \, \{ \mathcal T_r : r L \le mk + t \} \end{array}
  \right) .
\end{equation*}
We will use~$\mathbb E_{k,t}$ to denote the conditional expectation with respect to~$\mathcal F_{k,t}$. 

We define the inverse Hessian approximation~$H_r$ in \secref{sec:construction_of_inverse_hessian}. 
Note that we do not actually construct the matrix~$H_r$ because doing so would require~$O(d^2)$ computation. 
In practice, we directly compute products of the form~$H_rv$ using the two-loop recursion \citep[Algorithm~7.4]{nocedal2006numerical}. 
%We describe the two-loop recursion in \algoref{alg:two_loop_recursion} in \appref{sec:two_loop_recursion}. 

\subsection{Construction of the Inverse Hessian Approximation~$H_r$} \label{sec:construction_of_inverse_hessian}

To define the inverse Hessian approximation~$H_r$ from the pairs~$(s_j,y_j)$, we follow the usual L-BFGS method. 
Let~$\rho_j=1/s_j^{\top}y_j$ and recursively define  
\begin{equation} \label{eq:inv_hess_update}
  H_r^{(j)} = (I - \rho_j s_j y_j^{\top})^{\top}H_r^{(j-1)}(I - \rho_j s_j y_j^{\top}) + \rho_j s_j s_j^{\top} ,
\end{equation}
for~$r-M+1 \le j \le r$. 
Initialize~$H_r^{(r-M)}=(s_r^{\top}y_r / \|y_r\|^2)I$ and set~$H_r=H_r^{(r)}$. 

Note that the update in \eqref{eq:inv_hess_update} preserves positive definiteness (note that~$\rho_j > 0$), which implies that~$H_r$ and each~$H_r^{(j)}$ will be positive definite, as will their inverses.

\section{Preliminaries}

Our analysis makes use of the following assumptions. 
\begin{assumption} \label{ass:diff}
  The function~$f_i \colon \mathbb R^n \to \mathbb R$ is convex and twice continuously differentiable for each~$1 \le i \le N$. 
\end{assumption}
\begin{assumption} \label{ass:hessian_bounds}
  There exist positive constants~$\lambda$ and~$\Lambda$ such that
  \begin{equation}
    \lambda I \preceq \nabla^2 f_{\mathcal T}(w) \preceq \Lambda I
  \end{equation}
  for all~$w \in \mathbb R^d$ and all nonempty subsets~$\mathcal T \subseteq\{1,\ldots,N\}$. 
Note the lower bound trivially holds in the regularized case.
\end{assumption}
We will typically force strong convexity to hold by adding a strongly-convex regularizer to our objective (which can be absorbed into the~$f_i$'s).  
These assumptions imply that~$f$ has a unique minimizer, which we denote by~$w_*$. 

\begin{lemma} \label{lem:trace_and_det_bounds}
  Suppose that \assref{ass:diff} and \assref{ass:hessian_bounds} hold. 
  Let~$B_r=H_r^{-1}$. 
  Then
  \begin{align*}
    \tr(B_r) & \le (d+M)\Lambda \\
    \det(B_r) & \ge \frac{\lambda^{d+M}}{((d+M)\Lambda)^M} .
  \end{align*}
\end{lemma}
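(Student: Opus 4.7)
The plan is to work with the direct BFGS update for $B = H^{-1}$ rather than the inverse form in \eqref{eq:inv_hess_update}. In $B$-coordinates, each step of the recursion becomes the rank-two update
\begin{equation*}
B_r^{(j)} = B_r^{(j-1)} - \frac{B_r^{(j-1)} s_j s_j^\top B_r^{(j-1)}}{s_j^\top B_r^{(j-1)} s_j} + \frac{y_j y_j^\top}{y_j^\top s_j},
\end{equation*}
started from $B_r^{(r-M)} = (\|y_r\|^2/s_r^\top y_r) I$. Because $y_j = \nabla^2 f_{\mathcal T_j}(u_j) s_j$, \assref{ass:hessian_bounds} immediately gives $\lambda \|s_j\|^2 \le s_j^\top y_j \le \Lambda \|s_j\|^2$ and, writing $A = \nabla^2 f_{\mathcal T_j}(u_j)$, $\|y_j\|^2 = s_j^\top A^2 s_j \le \Lambda\, s_j^\top A s_j = \Lambda\, y_j^\top s_j$. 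Cauchy--Schwarz also yields $\|y_j\|^2 \ge (s_j^\top y_j)^2/\|s_j\|^2 \ge \lambda\, y_j^\top s_j$, so $\lambda \le \|y_j\|^2/y_j^\top s_j \le \Lambda$ throughout.

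For the trace bound, I would take the trace of the BFGS update. The middle term $\|B_r^{(j-1)} s_j\|^2 / s_j^\top B_r^{(j-1)} s_j$ is nonnegative (since $B_r^{(j-1)}\succ 0$), and the third contributes $\|y_j\|^2/y_j^\top s_j \le \Lambda$. Hence
\begin{equation*}
\tr\bigl(B_r^{(j)}\bigr) \le \tr\bigl(B_r^{(j-1)}\bigr) + \Lambda.
\end{equation*}
The initial trace is $\tr(B_r^{(r-M)}) = d \cdot \|y_r\|^2/s_r^\top y_r \le d\Lambda$, so after the $M$ updates $\tr(B_r) \le (d+M)\Lambda$, as claimed.

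For the determinant, I would use the standard identity for the BFGS update,
\begin{equation*}
\det\bigl(B_r^{(j)}\bigr) = \det\bigl(B_r^{(j-1)}\bigr)\cdot \frac{y_j^\top s_j}{s_j^\top B_r^{(j-1)} s_j},
\end{equation*}
which follows from the matrix determinant lemma applied to the rank-two update (or equivalently from the well-known formula $\det(B_{\text{new}}) = \det(B)\, (y^\top s)/(s^\top B s)$ for BFGS). The numerator satisfies $y_j^\top s_j \ge \lambda \|s_j\|^2$, and the denominator is controlled using the trace bound already established at step $j-1$: $s_j^\top B_r^{(j-1)} s_j \le \|s_j\|^2 \tr(B_r^{(j-1)}) \le (d+M)\Lambda \|s_j\|^2$. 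Therefore each ratio is at least $\lambda/((d+M)\Lambda)$. The initial determinant is $(\|y_r\|^2/s_r^\top y_r)^d \ge \lambda^d$, so after $M$ updates
\begin{equation*}
\det(B_r) \ge \lambda^d \cdot \Bigl(\frac{\lambda}{(d+M)\Lambda}\Bigr)^M = \frac{\lambda^{d+M}}{((d+M)\Lambda)^M}.
\end{equation*}

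The only subtle point — and the main ``obstacle,'' though a mild one — is to note that the trace bound must be invoked at the intermediate matrices $B_r^{(j-1)}$ in order to bound $s_j^\top B_r^{(j-1)} s_j$ in the determinant calculation; this is legitimate because the trace inductive step only uses nonnegativity of the quadratic term and the $\Lambda$-bound on $\|y_j\|^2/y_j^\top s_j$, both of which hold uniformly in $j$. Everything else is mechanical algebra once the BFGS direct-form update and the Hessian-vector identity for $y_j$ are in hand.
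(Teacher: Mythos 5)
Your proof is correct and follows essentially the same route as the paper: passing to the direct BFGS update for $B_r=H_r^{-1}$, bounding the trace inductively via $\|y_j\|^2/(y_j^\top s_j)\le\Lambda$, and bounding the determinant via the rank-two determinant identity together with the trace bound on $B_r^{(j-1)}$. The only cosmetic difference is how you obtain $\lambda \le \|y_j\|^2/(y_j^\top s_j)\le\Lambda$ (operator inequality plus Cauchy--Schwarz rather than the paper's Rayleigh-quotient substitution $z_j=(\nabla^2 f_{\mathcal T_j}(u_j))^{1/2}s_j$), which is equivalent.
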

We prove \lemref{lem:trace_and_det_bounds} in \secref{sec:proof_of_lem_trace_and_det_bounds}. 
\begin{lemma} \label{lem:hess_approx_bounds}
  Suppose that \assref{ass:diff} and \assref{ass:hessian_bounds} hold. 
Then there exist constants~$0 < \gamma \le \Gamma$ such that~$H_r$ satisfies
  \begin{equation}
    \gamma I \preceq H_r \preceq \Gamma I 
  \end{equation}
  for all~$r \ge 1$. 
\end{lemma}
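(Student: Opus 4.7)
The plan is to deduce both bounds from Lemma~\ref{lem:trace_and_det_bounds} by viewing the trace and determinant as elementary symmetric functions of the eigenvalues of $B_r = H_r^{-1}$. Since the L-BFGS update in \eqref{eq:inv_hess_update} preserves positive definiteness (as noted immediately after that equation), $B_r$ has strictly positive eigenvalues $\mu_1 \ge \mu_2 \ge \cdots \ge \mu_d > 0$, and it suffices to sandwich these between two positive constants independent of $r$; the reciprocals then serve as $\gamma$ and $\Gamma$ for $H_r$.

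First I would establish the lower bound on $H_r$. Because every $\mu_i$ is positive, the trace bound from Lemma~\ref{lem:trace_and_det_bounds} gives $\mu_1 \le \tr(B_r) \le (d+M)\Lambda$. Consequently every eigenvalue of $B_r$ is at most $(d+M)\Lambda$, so $B_r \preceq (d+M)\Lambda\, I$ and therefore $H_r \succeq \gamma I$ with
\begin{equation*}
\gamma = \frac{1}{(d+M)\Lambda}.
\end{equation*}

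Next I would obtain the upper bound on $H_r$ by isolating the smallest eigenvalue of $B_r$. Writing $\det(B_r) = \mu_d \prod_{i=1}^{d-1} \mu_i$ and using $\mu_i \le (d+M)\Lambda$ for each $i \le d-1$ from the previous step, we get
\begin{equation*}
\mu_d \ge \frac{\det(B_r)}{((d+M)\Lambda)^{d-1}} \ge \frac{\lambda^{d+M}}{((d+M)\Lambda)^{M+d-1}},
\end{equation*}
where the second inequality uses the determinant bound from Lemma~\ref{lem:trace_and_det_bounds}. Thus $H_r \preceq \Gamma I$ with
\begin{equation*}
\Gamma = \frac{((d+M)\Lambda)^{M+d-1}}{\lambda^{d+M}}.
\end{equation*}

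Both constants are manifestly positive, finite, and independent of $r$, so the conclusion follows. There is really no conceptual obstacle here: once Lemma~\ref{lem:trace_and_det_bounds} is in hand, the argument is a standard eigenvalue pinching (as in Byrd et al.~2014), with the only subtlety being that one must use the trace bound twice, first to control $\mu_1$ and then to control the product $\mu_1 \cdots \mu_{d-1}$ appearing in the determinant identity. The real work is in proving Lemma~\ref{lem:trace_and_det_bounds} itself, which controls the effect of the $M$ BFGS updates on the trace and determinant.
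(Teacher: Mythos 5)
Your proposal is correct and follows essentially the same route as the paper: bound $\lambda_{\max}(B_r)$ by the trace, then bound $\lambda_{\min}(B_r)$ below by $\det(B_r)/\lambda_{\max}(B_r)^{d-1}$, and invert to get $\gamma = 1/((d+M)\Lambda)$ and $\Gamma = ((d+M)\Lambda)^{d+M-1}/\lambda^{d+M}$, exactly the constants stated in the paper.
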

In \secref{sec:proof_of_lem_hess_approx_bounds}, we prove \lemref{lem:hess_approx_bounds} with the values 
\begin{equation*}
  \gamma = \frac{1}{(d+M)\Lambda} \quad \text{and} \quad \Gamma = \frac{((d+M)\Lambda)^{d+M-1}}{\lambda^{d+M}} .
\end{equation*}

We will make use of \lemref{lem:strong_convex_bound}, a simple result for strongly convex functions. 
We include a proof for completeness. 
\begin{lemma} \label{lem:strong_convex_bound}
  Suppose that~$f$ is continuously differentiable and strongly convex with parameter~$\lambda$. 
  Let~$w_*$ be the unique minimizer of~$f$. 
  Then for any~$x \in \mathbb R^d$, we have
  \begin{equation*}
    \|\nabla f(x)\|^2 \ge 2\lambda(f(x)-f(w_*)) .
  \end{equation*}
\end{lemma}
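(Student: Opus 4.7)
The plan is to reduce the claim to the classical Polyak--\L{}ojasiewicz inequality for strongly convex functions, which falls out immediately from minimizing the strong convexity lower bound. First I would invoke the definition of $\lambda$-strong convexity of $f$ (which follows from \assref{ass:hessian_bounds} once we regard $f$ as the average of the $f_i$): for any $x,y \in \mathbb{R}^d$,
\begin{equation*}
  f(y) \ge f(x) + \nabla f(x)^\top (y-x) + \tfrac{\lambda}{2}\|y-x\|^2 .
\end{equation*}

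Next I would fix $x$ and view the right-hand side as a convex quadratic in $y$. Setting its gradient in $y$ to zero gives the unconstrained minimizer $y^\star = x - \tfrac{1}{\lambda}\nabla f(x)$, and substituting back produces the value $f(x) - \tfrac{1}{2\lambda}\|\nabla f(x)\|^2$. Hence for every $y \in \mathbb{R}^d$,
\begin{equation*}
  f(y) \ge f(x) - \tfrac{1}{2\lambda}\|\nabla f(x)\|^2 .
\end{equation*}

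Finally, I would specialize this inequality to $y = w_*$, yielding $f(w_*) \ge f(x) - \tfrac{1}{2\lambda}\|\nabla f(x)\|^2$, which rearranges to the desired bound $\|\nabla f(x)\|^2 \ge 2\lambda (f(x) - f(w_*))$.

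There is no real obstacle in this argument; it is essentially a one-line minimization. The only point worth flagging is that the optimizer $y^\star$ of the quadratic lower bound need not coincide with the true minimizer $w_*$ of $f$, but this does not matter because the inequality derived after the minimization step holds for \emph{every} $y$, and we only need to evaluate it at $y = w_*$.
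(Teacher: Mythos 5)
Your proof is correct and is essentially the same argument as the paper's: both minimize the quadratic lower bound from strong convexity (the paper minimizes over $v = y - x$ after plugging in $y = w_*$, you minimize over $y$ first and then evaluate at $w_*$), arriving at the identical Polyak--\L{}ojasiewicz-type bound.
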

\begin{proof}
By the strong convexity of~$f$, 
\begin{align*}
  f(w_*) & \ge f(x) + \nabla f(x)^{\top}(w_* - x) + \frac{\lambda}{2}\|w_*-x\|^2 \\
  & \ge f(x) + \min_v \left( \nabla f(x)^{\top}v + \frac{\lambda}{2}\|v\|^2\right) \\
  & = f(x) - \frac{1}{2\lambda}\|\nabla f(x)\|^2 .
\end{align*}
The last equality holds by plugging in the minimizer~$v=-\nabla f(x)/\lambda$.
\end{proof}

In \lemref{lem:bound_grad_var}, we bound the variance of our variance-reduced gradient estimates. 
The proof of \lemref{lem:bound_grad_var}, given in \secref{sec:proof_of_bound_grad_var}, closely follows that of \citet[Theorem~1]{johnson2013accelerating}.
\begin{lemma} \label{lem:bound_grad_var}
  Let~$w_*$ be the unique minimizer of~$f$. 
Let~$\mu_k=\nabla f(w_k)$ and let~$v_t=\nabla f_{\mathcal S}(x_t) - \nabla f_{\mathcal S}(w_k) + \mu_k$ be the variance-reduced stochastic gradient. 
  Conditioning on~$\mathcal F_{k,t}$ and taking an expectation with respect to~$\mathcal S$, we have
\begin{equation} 
    \mathbb E_{k,t}[\|v_t\|^2] \le 4\Lambda(f(x_{t})-f(w_*) + f(w_{k})-f(w_*)) .
\end{equation}
\end{lemma}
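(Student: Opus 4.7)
My plan is to follow the variance-reduction argument of Johnson and Zhang almost verbatim, adapted to the minibatch setting. The key identity to exploit is $\nabla f(w_*) = 0$, which makes $w_*$ a convenient anchor point for centering both gradient differences.

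First, I would rewrite
\begin{equation*}
  v_t = \bigl[\nabla f_{\mathcal S}(x_t) - \nabla f_{\mathcal S}(w_*)\bigr] - \bigl[\nabla f_{\mathcal S}(w_k) - \nabla f_{\mathcal S}(w_*) - \mu_k\bigr],
\end{equation*}
and apply $\|a - b\|^2 \le 2\|a\|^2 + 2\|b\|^2$ to bound $\|v_t\|^2$ by twice the sum of the squared norms of the two bracketed terms. Because $\mathcal S$ is sampled uniformly, $\mathbb E_{k,t}[\nabla f_{\mathcal S}(w_k) - \nabla f_{\mathcal S}(w_*)] = \nabla f(w_k) - \nabla f(w_*) = \mu_k$, so the second bracketed term is centered; the standard bound $\mathbb E\|X - \mathbb E X\|^2 \le \mathbb E\|X\|^2$ lets me replace it by $\mathbb E_{k,t}\|\nabla f_{\mathcal S}(w_k) - \nabla f_{\mathcal S}(w_*)\|^2$.

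Next, I would establish the per-component bound $\|\nabla f_i(w) - \nabla f_i(w_*)\|^2 \le 2\Lambda\bigl(f_i(w) - f_i(w_*) - \nabla f_i(w_*)^\top(w-w_*)\bigr)$, which is the standard co-coercivity-type inequality available for any $\Lambda$-smooth convex function (it follows from Assumption~\ref{ass:hessian_bounds} applied to each $f_i$, since the uniform Hessian bound on all subsampled functions in particular gives $\nabla^2 f_i \preceq \Lambda I$, and convexity from Assumption~\ref{ass:diff}). Averaging over a uniformly random index $i$ and using $\nabla f(w_*)=0$ yields $\mathbb E_i\|\nabla f_i(w) - \nabla f_i(w_*)\|^2 \le 2\Lambda(f(w) - f(w_*))$.

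To lift this to the minibatch $\mathcal S$, I would use Jensen's inequality on the squared norm: $\|\nabla f_{\mathcal S}(w) - \nabla f_{\mathcal S}(w_*)\|^2 \le \frac{1}{|\mathcal S|}\sum_{i\in\mathcal S}\|\nabla f_i(w) - \nabla f_i(w_*)\|^2$. Taking expectation over the uniformly sampled minibatch reduces this to the single-index case, giving $\mathbb E_{k,t}\|\nabla f_{\mathcal S}(w) - \nabla f_{\mathcal S}(w_*)\|^2 \le 2\Lambda(f(w)-f(w_*))$ for $w\in\{x_t, w_k\}$ (both are $\mathcal F_{k,t}$-measurable, so can be treated as deterministic under $\mathbb E_{k,t}$). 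Substituting the two bounds into the split of $\|v_t\|^2$ and combining constants gives the factor of $4\Lambda$ in front of $(f(x_t)-f(w_*)) + (f(w_k)-f(w_*))$.

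The only real subtlety is the step where I pass from a bound on $\|\nabla f_i\|^2$ to a bound on $\|\nabla f_{\mathcal S}\|^2$; if I were sloppy I could lose a factor depending on $|\mathcal S|$, but the Jensen-then-sample ordering makes the dependence drop out cleanly. Everything else is bookkeeping around the centering identity and the smoothness inequality.
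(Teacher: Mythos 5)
Your proof is correct and takes essentially the same route as the paper's: the same splitting of $v_t$ with $\|a-b\|^2 \le 2\|a\|^2+2\|b\|^2$, the same centering step via $\mathbb E\|\xi - \mathbb E\xi\|^2 \le \mathbb E\|\xi\|^2$, and the same co-coercivity bound anchored at $w_*$ with $\nabla f(w_*)=0$. The only (immaterial) difference is that you apply the co-coercivity inequality to each $f_i$ and lift to the minibatch by Jensen, whereas the paper applies it directly to $f_{\mathcal S}$, which is itself $\Lambda$-smooth by Assumption~\ref{ass:hessian_bounds}, and then averages over all minibatches of size $b$; both give the same $2\Lambda$ constant.
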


\section{Convergence Analysis}

\thref{th:linear_convergence} states our main result. 

\begin{theorem} \label{th:linear_convergence}
  Suppose that \assref{ass:diff} and \assref{ass:hessian_bounds} hold. 
  Let~$w_*$ be the unique minimizer of~$f$. 
  Then for all~$k \ge 0$, we have
  \begin{equation*}
    \mathbb E[f(w_{k})-f(w_*)] \le \alpha^k \mathbb E[f(w_0)-f(w_*)] ,
  \end{equation*}
  where the convergence rate~$\alpha$ is given by
  \begin{equation*}
    \alpha = \frac{1/(2m\eta) + \eta \Gamma^2 \Lambda^2}{\gamma \lambda - \eta\Gamma^2\Lambda^2} < 1 ,
  \end{equation*}
  assuming that we choose~$\eta< \gamma\lambda/(2\Gamma^2\Lambda^2)$ and that we choose~$m$ large enough to satisfy
  \begin{align}
    \gamma \lambda & > \frac{1}{2m\eta} + 2\eta\Gamma^2\Lambda^2 . \label{eq:eta_m_size}
  \end{align}
\end{theorem}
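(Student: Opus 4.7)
The plan is to analyze one outer epoch $k \to k+1$ and derive a per-step contraction on $f(x_t) - f(w_*)$, then telescope over the $m$ inner iterations. The starting point is the standard descent inequality: \assref{ass:hessian_bounds} applied with $\mathcal T = \{1,\ldots,N\}$ forces $f$ to be $\Lambda$-smooth, so the update $x_{t+1} = x_t - \eta H_r v_t$ satisfies $f(x_{t+1}) \le f(x_t) - \eta \nabla f(x_t)^\top H_r v_t + \tfrac{\eta^2 \Lambda}{2}\|H_r v_t\|^2$. I would then take the conditional expectation $\mathbb E_{k,t}$; because $H_r$ is $\mathcal F_{k,t}$-measurable (this is precisely why the filtration is defined to include all $\mathcal T_r$ with $rL \le mk + t$) and $v_t$ is unbiased for $\nabla f(x_t)$, the cross term collapses to $\eta\, \nabla f(x_t)^\top H_r \nabla f(x_t)$.

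Next I would bound the two surviving terms using the three preliminary lemmas. Using $H_r \succeq \gamma I$ from \lemref{lem:hess_approx_bounds} and then \lemref{lem:strong_convex_bound}, the cross term is at least $2\eta\gamma\lambda\,(f(x_t) - f(w_*))$. Using $H_r \preceq \Gamma I$ and then \lemref{lem:bound_grad_var}, the quadratic term is at most $2\eta^2\Gamma^2\Lambda^2\big[(f(x_t) - f(w_*)) + (f(w_k) - f(w_*))\big]$. Subtracting $f(w_*)$ from both sides of the descent inequality then yields a one-step recursion of the shape
\begin{equation*}
\mathbb E_{k,t}[f(x_{t+1}) - f(w_*)] \le \bigl(1 - 2\eta\gamma\lambda + 2\eta^2\Gamma^2\Lambda^2\bigr)(f(x_t) - f(w_*)) + 2\eta^2\Gamma^2\Lambda^2 \,(f(w_k) - f(w_*)).
\end{equation*}

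To close the induction I would apply the tower property and sum this inequality for $t = 0, \ldots, m-1$. Two observations finish the bookkeeping: the $\mathbb E[f(x_{t+1}) - f(x_t)]$ differences telescope to $\mathbb E[f(x_m) - f(x_0)] \ge -\mathbb E[f(w_k) - f(w_*)]$, since $x_0 = w_k$ and $f(x_m) \ge f(w_*)$; and the uniform random choice of $w_{k+1}$ from $\{x_0, \ldots, x_{m-1}\}$ gives $\frac{1}{m}\sum_t \mathbb E[f(x_t) - f(w_*)] = \mathbb E[f(w_{k+1}) - f(w_*)]$. Rearranging and dividing by the factor $m(2\eta\gamma\lambda - 2\eta^2\Gamma^2\Lambda^2)$, which is positive under the assumed bound $\eta < \gamma\lambda/(2\Gamma^2\Lambda^2)$, yields $\mathbb E[f(w_{k+1}) - f(w_*)] \le \alpha\, \mathbb E[f(w_k) - f(w_*)]$ with exactly the stated $\alpha$; condition \eqref{eq:eta_m_size} is precisely what makes $\alpha < 1$, and induction on $k$ delivers the theorem.

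The main delicacy I expect is the measurability bookkeeping that allows $H_r$ to be pulled out of $\mathbb E_{k,t}$; without it the whole step of treating $H_r$ as a fixed preconditioner at inner time $t$ would fail. The rest is routine algebra, but one feature worth tracking carefully is that \lemref{lem:bound_grad_var} contaminates the current-iterate suboptimality with the anchor suboptimality at $w_k$, and it is this cross term that couples $\eta$ and $m$ and ultimately dictates the form of the convergence rate.
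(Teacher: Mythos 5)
Your proposal is correct and follows essentially the same route as the paper's proof: the $\Lambda$-smoothness descent inequality, conditioning with $\mathbb E_{k,t}[v_t]=\nabla f(x_t)$, the spectral bounds $\gamma I \preceq H_r \preceq \Gamma I$ combined with Lemmas \ref{lem:strong_convex_bound} and \ref{lem:bound_grad_var}, then summing over $t$, telescoping with $x_0 = w_k$ and $f(x_m)\ge f(w_*)$, and using the uniformly random choice of $w_{k+1}$ to obtain exactly the stated $\alpha$. The only differences are cosmetic (you subtract $f(w_*)$ before summing rather than after) plus your explicit remark on the $\mathcal F_{k,t}$-measurability of $H_r$, which the paper uses implicitly.
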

\begin{proof}
Using the Lipschitz continuity of~$\nabla f$, which follows from \assref{ass:hessian_bounds}, we have
%\begin{equation} 
\begin{align} \label{eq:more_bounds_in_proof} 
  & \,\, f(x_{t+1})  \\
  \le & \,\, f(x_{t}) + \nabla f(x_{t})^{\top}(x_{t+1} - x_{t}) + \frac{\Lambda}{2}\|x_{t+1} - x_{t}\|^2 \nonumber \\
  = & \,\, f(x_{t}) - \eta \nabla f(x_{t})^{\top} H_r v_{t} + \frac{\eta^2 \Lambda}{2} \|H_{k}v_{t} \|^2 . \nonumber
\end{align}
%\end{equation}

Conditioning on~$\mathcal F_{k,t}$ and taking expectations in \eqref{eq:more_bounds_in_proof}, this becomes
%\begin{equation} \label{eq:expansion_with_expectations}
%  \mathbb E_{k,t}[f(x_{t+1})] \le f(x_{t}) - \eta \nabla f(x_{t})^{\top} H_r \nabla f(x_{t}) + \frac{\eta^2 \Lambda}{2} \mathbb E_{k,t}\|H_{k}v_{t} \|^2 ,
%\end{equation}
%\begin{equation} 
\begin{align} \label{eq:expansion_with_expectations}
  & \,\,\mathbb E_{k,t}[f(x_{t+1})]  \\
  \le & \,\, f(x_{t}) - \eta \nabla f(x_{t})^{\top} H_r \nabla f(x_{t}) + \frac{\eta^2 \Lambda}{2} \mathbb E_{k,t}\|H_{k}v_{t} \|^2 , \nonumber
\end{align}
%\end{equation}
where we used the fact that~$\mathbb E_{k,t}[v_{t}] = \nabla f(x_{t})$. 
%We then use \lemref{lem:hess_approx_bounds} to bound the second and third terms on the right hand side of \eqref{eq:expansion_with_expectations} to get
We then use \lemref{lem:hess_approx_bounds} to bound the second and third terms on the bottom line of \eqref{eq:expansion_with_expectations} to get
\begin{equation*}
  \mathbb E_{k,t}[f(x_{t+1})] \le f(x_{t}) - \eta \gamma \|\nabla f(x_{t})\|^2 + \frac{\eta^2 \Gamma^2 \Lambda}{2} \mathbb E_{k,t}\|v_{t} \|^2 .
\end{equation*}
Now, we bound~$\mathbb E_{k,t}\|v_{t}\|^2$ using \lemref{lem:bound_grad_var} and we bound~$\|\nabla f(x_{t})\|^2$ using \lemref{lem:strong_convex_bound}. 
Doing so gives
%\begin{align*}
%  \mathbb E_{k,t}[f(x_{t+1})] & \le f(x_{t}) - 2\eta \gamma \lambda (f(x_{t})-f(w_*)) + 2\eta^2 \Gamma^2 \Lambda^2(f(x_{t}) - f(w_*) + f(w_{k}) - f(w_*)) \\
%  & = f(x_{t}) - 2\eta(\gamma \lambda - \eta\Gamma^2\Lambda^2) (f(x_{t})-f(w_*)) + 2\eta^2 \Gamma^2 \Lambda^2(f(w_{k}) - f(w_*)) .
%\end{align*}
\begin{align*}
  & \,\, \mathbb E_{k,t}[f(x_{t+1})] \\
  \le & \,\, f(x_{t}) - 2\eta \gamma \lambda (f(x_{t})-f(w_*)) \\
  & \quad + 2\eta^2 \Gamma^2 \Lambda^2(f(x_{t}) - f(w_*) + f(w_{k}) - f(w_*)) \\
  = & \,\, f(x_{t}) - 2\eta(\gamma \lambda - \eta\Gamma^2\Lambda^2) (f(x_{t})-f(w_*)) \\
  & \quad + 2\eta^2 \Gamma^2 \Lambda^2(f(w_{k}) - f(w_*)) .
\end{align*}
Taking expectations over all random variables, summing over~$t=0,\ldots,m-1$, and using a telescoping sum gives
%\begin{equation*}
%\begin{aligned}
%  \mathbb E[f(x_m)] & \le \mathbb E[f(x_0)] - 2\eta(\gamma \lambda - \eta\Gamma^2\Lambda^2) \left(\sum_{t=0}^{m-1} \mathbb E [f(x_{t})]- mf(w_*)\right) + 2m\eta^2 \Gamma^2 \Lambda^2 \mathbb E[f(w_{k}) - f(w_*)] \\
%  & = \mathbb E[f(w_{k})] - 2m \eta(\gamma \lambda - \eta\Gamma^2\Lambda^2) \mathbb E[f(w_{k+1}) - f(w_*)] + 2m\eta^2 \Gamma^2 \Lambda^2 \mathbb E[f(w_{k}) - f(w_*)] .
%\end{aligned}
%\end{equation*}
\begin{equation*}
\begin{aligned}
  & \,\, \mathbb E[f(x_m)] \\
  \le & \,\, \mathbb E[f(x_0)] + 2m\eta^2 \Gamma^2 \Lambda^2 \mathbb E[f(w_{k}) - f(w_*)] \\
  & \quad - 2\eta(\gamma \lambda - \eta\Gamma^2\Lambda^2) \left(\sum_{t=0}^{m-1} \mathbb E [f(x_{t})]- mf(w_*)\right) \\
  = & \,\, \mathbb E[f(w_{k})] + 2m\eta^2 \Gamma^2 \Lambda^2 \mathbb E[f(w_{k}) - f(w_*)] \\
  & \quad - 2m \eta(\gamma \lambda - \eta\Gamma^2\Lambda^2) \mathbb E[f(w_{k+1}) - f(w_*)] .
\end{aligned}
\end{equation*}
Rearranging the above gives
\begin{equation*}
\begin{aligned}
  0 \le & \,\, \mathbb E[f(w_{k}) - f(x_m)]  + 2m\eta^2 \Gamma^2 \Lambda^2 \mathbb E[f(w_{k}) - f(w_*)] \\
  & \quad - 2m \eta(\gamma \lambda - \eta\Gamma^2\Lambda^2) \mathbb E[f(w_{k+1}) - f(w_*)] \\
  \le & \,\, \mathbb E[f(w_{k}) - f(w_*)]  + 2m\eta^2 \Gamma^2 \Lambda^2 \mathbb E[f(w_{k}) - f(w_*)] \\
  & \quad - 2m \eta(\gamma \lambda - \eta\Gamma^2\Lambda^2) \mathbb E[f(w_{k+1}) - f(w_*)] \\
  = & \,\, (1 + 2m\eta^2 \Gamma^2 \Lambda^2) \mathbb E[f(w_{k}) - f(w_*)] \\
  & \quad - 2m \eta(\gamma \lambda - \eta\Gamma^2\Lambda^2) \mathbb E[f(w_{k+1}) - f(w_*)] .
\end{aligned}
\end{equation*}
The second inequality follows from the fact that~$f(w_*) \le f(x_m)$. 
Using the fact that~$\eta < \gamma\lambda/(2\Gamma^2\Lambda^2)$, it follows that
\begin{equation*}
\begin{aligned}
  & \,\, \mathbb E[f(w_{k+1}) - f(w_*)]  \\
  \le & \,\, \frac{1 + 2m\eta^2 \Gamma^2 \Lambda^2}{2m \eta( \gamma \lambda - \eta\Gamma^2\Lambda^2)} \mathbb E[f(w_{k}) - f(w_*)] .
\end{aligned}
\end{equation*}
Since we chose~$m$ and~$\eta$ to satisfy \eqref{eq:eta_m_size}, it follows that the rate~$\alpha$ is less than one. 
This completes the proof.
\end{proof}

\begin{figure*}[t]
        \begin{subfigure}[b]{0.328\textwidth}
                \centering
                \includegraphics[scale=1]{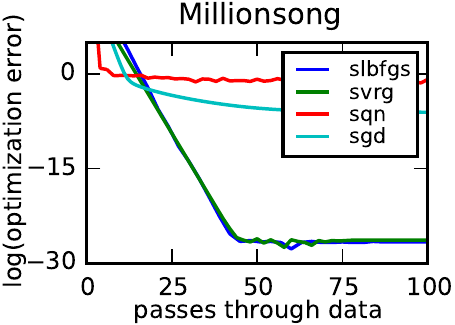}
        \end{subfigure}%
        ~
        \begin{subfigure}[b]{0.328\textwidth}
                \centering
                \includegraphics[scale=1]{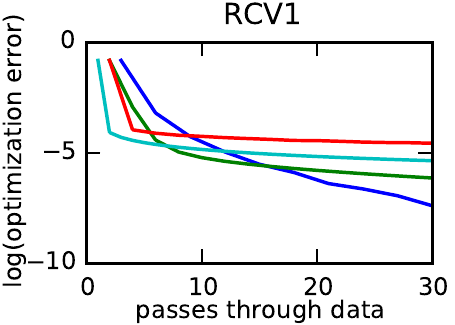}
        \end{subfigure}%
        ~
        \begin{subfigure}[b]{0.328\textwidth}
                \centering
                \includegraphics[scale=1]{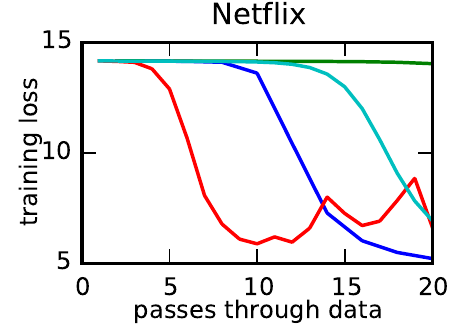}
        \end{subfigure}%

        \caption{
          The left figure plots the log of the optimization error as a function of the number of passes through the data for SLBFGS, SVRG, SQN, and SGD for a ridge regression problem (Millionsong). 
          The middle figure does the same for a support vector machine (RCV1). 
          The right plot shows the training loss as a function of the number of passes through the data for the same algorithms for a matrix completion problem (Netflix). 
        }
        \label{fig:exps}
\end{figure*}

\begin{figure*}[t]
        \begin{subfigure}[b]{0.5\textwidth}
                \centering
                \includegraphics[scale=1]{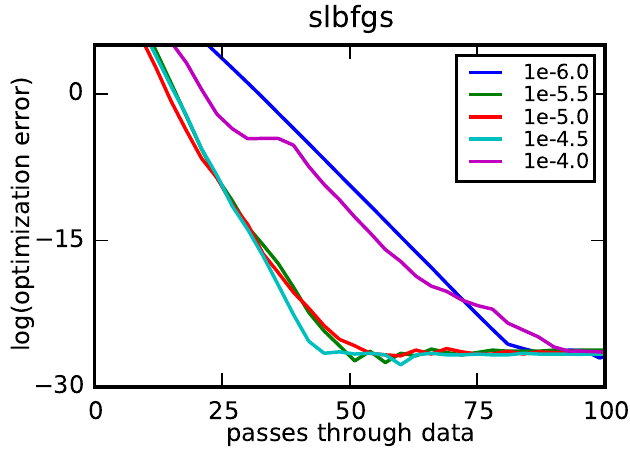}
        \end{subfigure}%
        ~
        \begin{subfigure}[b]{0.5\textwidth}
                \centering
                \includegraphics[scale=1]{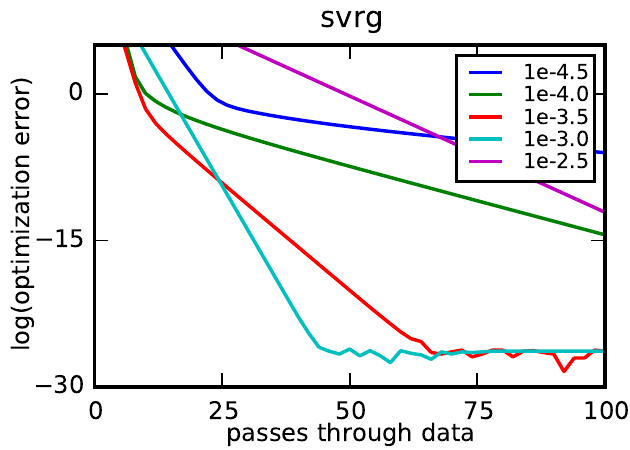}
        \end{subfigure}%

        \begin{subfigure}[b]{0.5\textwidth}
                \centering
                \includegraphics[scale=1]{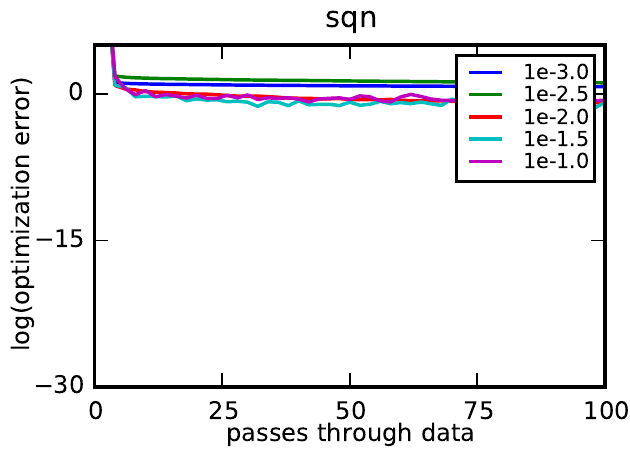}
        \end{subfigure}%
        ~
        \begin{subfigure}[b]{0.5\textwidth}
                \centering
                \includegraphics[scale=1]{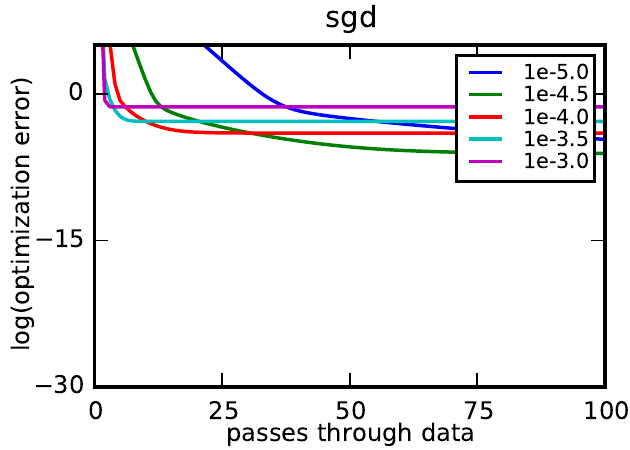}
        \end{subfigure}%
        \caption{These figures show the log of the optimization error for SLBFGS, SVRG, SQN, and SGD on a ridge regression problem (millionsong) for a wide range of step sizes.}
        \label{fig:stepsize_millionsong}
\end{figure*}

\begin{figure*}[t]
        \begin{subfigure}[b]{0.5\textwidth}
                \centering
                \includegraphics[scale=1]{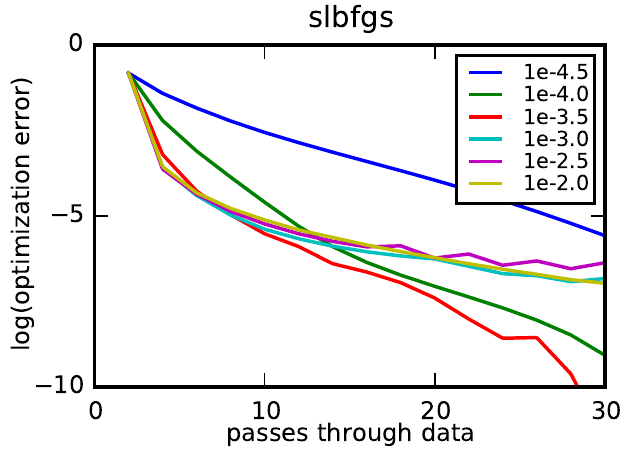}
        \end{subfigure}%
        ~
        \begin{subfigure}[b]{0.5\textwidth}
                \centering
                \includegraphics[scale=1]{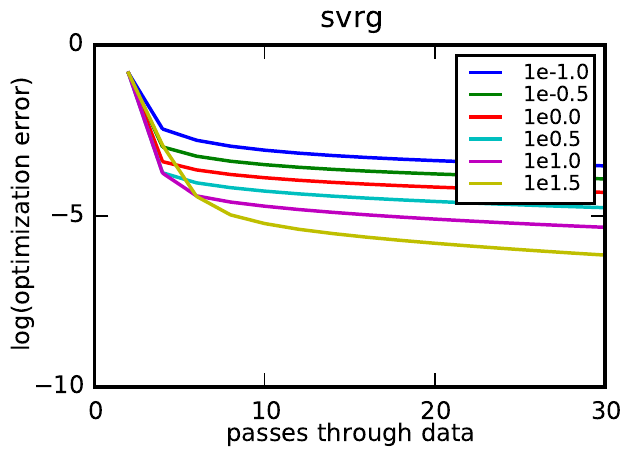}
        \end{subfigure}%

        \begin{subfigure}[b]{0.5\textwidth}
                \centering
                \includegraphics[scale=1]{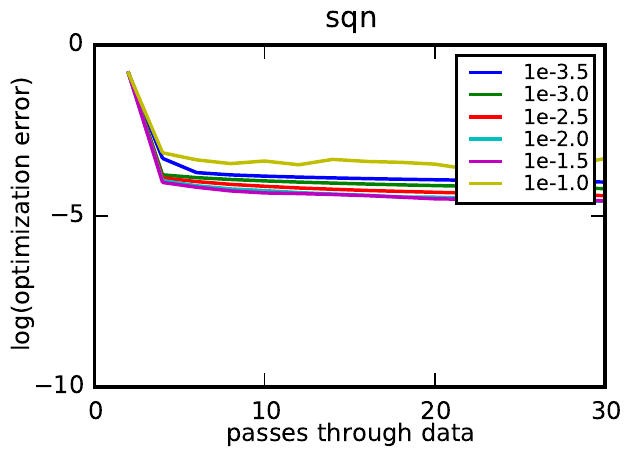}
        \end{subfigure}%
        ~
        \begin{subfigure}[b]{0.5\textwidth}
                \centering
                \includegraphics[scale=1]{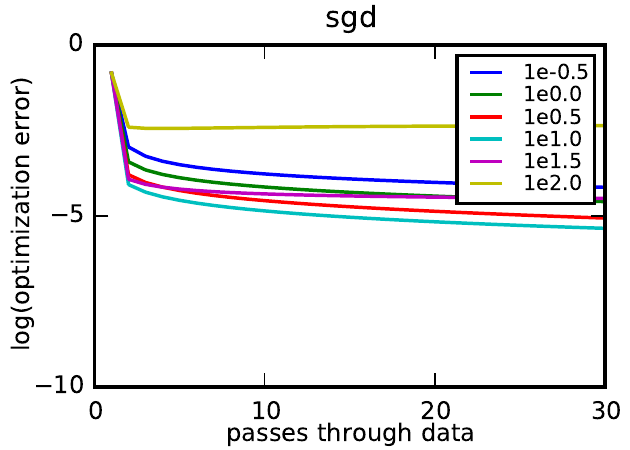}
        \end{subfigure}%
        \caption{These figures show the log of the optimization error for SLBFGS, SVRG, SQN, and SGD on a support vector machine (RCV1) for a wide range of step sizes.}
        \label{fig:stepsize_rcv1}
\end{figure*}

\section{Related Work}

There is a large body of work that attempts to improve on stochastic gradient descent by reducing variance. 
\citet{shalev2013stochastic} propose stochastic dual coordinate ascent (SDCA). 
\citet{roux2012stochastic} propose the stochastic average gradient method (SAG). 
\citet{johnson2013accelerating} propose the stochastic variance reduced gradient (SVRG). 
\citet{wang2013variance} develop an approach based on the construction of control variates. 
More recently, \citet{frostig2015competing} devise an online version of SVRG that uses streaming estimates of the gradient to perform variance reduction. 

Similarly, a number of stochastic quasi-Newton methods have been proposed. 
\citet{bordes2009sgd} propose a variant of stochastic gradient descent that makes use of second order information. 
\citet{mokhtari2014global} analyze the straightforward application of L-BFGS in the stochastic setting and prove a~$O(1/k)$ convergence rate in the strongly-convex setting. 
\citet{byrd2014stochastic} propose a modified version of L-BFGS in the stochastic setting and prove a~$O(1/k)$ convergence rate in the strongly-convex setting. 
\citet{sohl2014fast} propose a stochastic quasi-Newton method for minimizing sums of functions by maintaining a separate approximation of the inverse Hessian for each function in the sum. 
\citet{schraudolph2007stochastic} develop a stochastic version of L-BFGS for the online convex optimization setting. 
\citet{wang2014stochastic} prove the convergence of various stochastic quasi-Newton methods in the nonconvex setting. 
Our work differs from the preceding in that we guarantee a linear rate of convergence. 

\citet{lucchi2015variance} independently propose a variance-reduction procedure to speed up stochastic quasi-Newton methods and to achieve a linear rate of convergence. 
Their approach to updating the inverse-Hessian approximation is similar to that of L-BFGS, whereas our method leverages Hessian-vector products to stabilize the approximation. 

\section{Experimental Results} \label{sec:experiments}

To probe our theoretical results, we compare \algoref{alg:slbfgs} (SLBFGS) to the stochastic variance-reduced gradient method (SVRG) \citep{johnson2013accelerating}, the stochastic quasi-Newton method (SQN) \citep{byrd2014stochastic}, and stochastic gradient descent (SGD). 
We evaluate these algorithms on several popular machine learning models, including ridge regression, support vector machines, and matrix completion.
Our experiments show the effeciveness of the algorithm on real-world problems that are not neccessarily (strongly) convex.

Because SLBFGS and SVRG require computations of the full gradient, each epoch requires an additional pass through the data. 
Additionally, SLBFGS and SQN require Hessian-vector-product calculations, each of which is about as expensive as a gradient calculation \citet{pearlmutter1994fast}. 
The number of Hessian-vector-product computations per epoch introduced by this is~$(b_H N)/(b L)$, which in our experiments is either~$N$ or~$2N$. 
To incorporate these additional costs, our plots show error with respect to the number of passes through the data (that is, the number of gradient or Hessian-vector-product computations divided by~$N$). 
For this reason, the first iterations of SLBFGS, SVRG, SQN, and SGD all begin at different times, with SGD appearing first and SLBFGS appearing last.

For all experiments, we set the batch size~$b$ to either~$20$ or~$100$, we set the Hessian batch size~$b_H$ to~$10b$ or~$20b$, we set the Hessian update interval~$L$ to~$10$, we set the memory size~$M$ to~$10$, and we set the number of stochastic updates~$m$ to~$N/b$. 
We optimize the learning rate via grid search. 
SLBFGS and SVRG use a constant step size. 
For SQN and SGD, we try three different step-size schemes: constant,~$1/\sqrt{t}$, and~$1/t$, and we report the best one. 
All experiments are initialized with a vector of zeros, except for the matrix completion problem, where in order to break symmetry, we initialize the experiments with a vector of standard normal random variables scaled by~$10^{-5}$. 

First, we performed ridge regression on the millionsong dataset \citep{bertin-mahieux2011} consisting of approximately~$4.6 \times 10^5$ data points. 
We set the regularization parameter~$\lambda=10^{-3}$. 
In this experiment, both SLBFGS and SVRG rapidly solve the problem to high levels of precision. 
Second, we trained a support vector machine on RCV1 \citep{lewis2004rcv1}, with approximately~$7.8 \times 10^6$ data points. 
We set the regularization parameter to~$\lambda=0$. 
In this experiment, SGD and SQN make more progress initially as expected, but SLBFGS finds a better optimum. 
Third, we solve a nonconvex matrix completion problem on the Netflix Prize dataset, as formulated in \citet{recht2013parallel}, with approximately~$10^{8}$ data points. 
We set the regularization parameter to~$\lambda=10^{-4}$. 
The poor performance of SVRG and SGD on this problem may be accounted for by the fact that the algorithms are initialized near the vector of all zeros, which is a stationary point (though not the optimum). 
Presumably the use of curvature information helps SLBFGS and SQN escape the neighborhood of the all zeros vector faster than SVRG and SGD. 

\figref{fig:exps} plots a comparison of these methods on the three problems. 
For the convex problems, we plot the logarithm of the optimization error with respect to a precomputed reference solution. 
For the nonconvex problem, we simply plot the objective value as the global optimum is not necessarily known.

\subsection{Robustness to Choice of Step Size}

In this section, we illustrate that SLBFGS performs well on convex problems for a large range of step sizes. 
The windows in which SVRG, SQN, and SGD perform well are much narrower. 
In \figref{fig:stepsize_millionsong}, we plot the performance of SLBFGS, SVRG, SQN, and SGD for ridge regression on the millionsong dataset for step sizes varying over a couple orders of magnitude. 
In \figref{fig:stepsize_rcv1}, we show a similar plot for a support vector machine on the RCV1 dataset. 
In both cases, SLBFGS performs well, solving the problem to a high degree of precision over a large range of step sizes, whereas the performance of SVRG, SQN, and SGD degrade much more rapidly with poor step-size choices.

\section{Proofs of Preliminaries}

\subsection{Proof of \lemref{lem:trace_and_det_bounds}} \label{sec:proof_of_lem_trace_and_det_bounds}
The analysis below closely follows many other analyses of the inverse Hessian approximation used in L-BFGS \citep{nocedal2006numerical,byrd2014stochastic,mokhtari2014global,mokhtari2014res}, and we include it for completeness. 

Note that~$s_j^{\top}y_j=s_j \nabla^2 f_{\mathcal T_j}(u_j) s_j$, it follows from \assref{ass:hessian_bounds} that
\begin{equation} \label{eq:bound_s_sy}
  \lambda \|s_j\|^2 \le s_j^{\top}y_j \le \Lambda \|s_j\|^2 .
\end{equation}
Similarly, letting~$z_j=(\nabla^2 f_{\mathcal T_j}(u_j))^{1/2} s_j$ and noting that
\begin{equation*}
  \frac{\|y_j\|^2}{s_j^{\top}y_j} = \frac{z_j^{\top} \nabla^2 f_{\mathcal T_j}(u_j) z_j}{z_j^{\top}z_j} ,
\end{equation*}
\assref{ass:hessian_bounds} again implies that
\begin{equation} \label{eq:bound_y_sy}
  \lambda \le \frac{\|y_j\|^2}{s_j^{\top}y_j} \le \Lambda .
\end{equation}

Note that using the Sherman-Morrison-Woodbury formula, we can equivalently write \eqref{eq:inv_hess_update} in terms of the Hessian approximation~$B_r=H_r^{-1}$ as
\begin{equation} \label{eq:hess_update}
  B_r^{(j)} = B_r^{(j-1)} - \frac{B_r^{(j-1)} s_j s_j^{\top} B_r^{(j-1)}}{s_j^{\top} B_r^{(j-1)} s_j} + \frac{y_j y_j^{\top}}{y_j^{\top}s_j} .
\end{equation}
We will begin by bounding the eigenvalues of~$B_r$. 
We will do this indirectly by bounding the trace and determinant of~$B_r$. 
We have
\begin{align*}
  \tr(B_r^{(j)}) & = \tr(B_r^{(j-1)}) - \frac{\tr(B_r^{(j-1)} s_j s_j^{\top} B_r^{(j-1)})}{s_j^{\top} B_r^{(j-1)} s_j} + \frac{\tr(y_j y_j^{\top})}{y_j^{\top}s_j} \\
  & = \tr(B_r^{(j-1)}) - \frac{\|B_r^{(j-1)} s_j\|^2}{s_j^{\top} B_r^{(j-1)} s_j} + \frac{\|y_j\|^2}{y_j^{\top}s_j} \\
  & \le \tr(B_r^{(j-1)}) + \frac{\|y_j\|^2}{y_j^{\top}s_j} \\
  & \le \tr(B_r^{(j-1)}) + \Lambda .
\end{align*}
The first equality follows from the linearity of the trace operator. 
The second equality follows from the fact that $\tr(AB)=\tr(BA)$. 
The fourth relation follows from \eqref{eq:bound_y_sy}. 
Since
\begin{equation*}
  \tr(B_r^{(0)}) = d \frac{\|y_r\|^2}{s_r^{\top}y_r} \le d\Lambda ,
\end{equation*}
it follows inductively that
\begin{equation*}
  \tr(B_k) \le (d+M)\Lambda .
\end{equation*}
%In a similar manner, we bound the determinant in \lemref{lem:additional_technical_lemma} to obtain
%\begin{equation*}
%  \det(B_r^{(j)}) \ge \det(B_r^{(j-1)}) \frac{\lambda}{(d+M)\Lambda} .
%\end{equation*}
% removed stuff from here and put it in lemref:additional_technical_lemma

Now to bound the determinant, we write
\begin{align*}
  \det(B_r^{(j)}) & = \det(B_r^{(j-1)}) \\
  & \quad\, \det\left( I - \frac{s_j s_j^{\top} B_r^{(j-1)}}{s_j^{\top} B_r^{(j-1)} s_j} + \frac{(B_r^{(j-1)})^{-1}y_j y_j^{\top}}{y_j^{\top}s_j} \right) \\
  & = \det(B_r^{(j-1)}) \frac{y_j^{\top}s_j}{s_j^{\top} B_r^{(j-1)} s_j} \\
  & = \det(B_r^{(j-1)}) \frac{y_j^{\top}s_j}{\|s_j\|^2} \frac{\|s_j\|^2}{s_j^{\top} B_r^{(j-1)} s_j} \\
  & \ge \det(B_r^{(j-1)}) \frac{\lambda}{\lambda_{\max}(B_r^{(j-1)})} \\
  & \ge \det(B_r^{(j-1)}) \frac{\lambda}{\tr(B_r^{(j-1)})} \\
  & \ge \det(B_r^{(j-1)}) \frac{\lambda}{(d+M)\Lambda} .
\end{align*}
The first equality uses~$\det(AB)=\det(A)\det(B)$. 
The second equality follows from the identity
%\begin{equation} \label{eq:det_rank_two_update}
%  \det(I + u_1v_1^{\top} + u_2v_2^{\top}) = (1 + u_1^{\top}v_1)(1 + u_2^{\top}v_2) - (u_1^{\top}v_2)(v_1^{\top}u_2) 
%\end{equation}
\begin{align} \label{eq:det_rank_two_update}
  & \,\, \det(I + u_1v_1^{\top} + u_2v_2^{\top}) \\
  = & \,\, (1 + u_1^{\top}v_1)(1 + u_2^{\top}v_2) - (u_1^{\top}v_2)(v_1^{\top}u_2) \nonumber
\end{align}
by setting $u_1 = -s_j$, $v_1 = (B_r^{(j-1)}s_j)/(s_j^{\top}B_r^{(j-1)}s_j)$, $u_2 = (B_r^{(j-1)})^{-1} y_j$, and $v_2 = y_j/(y_j^{\top}s_j)$.
%\begin{equation*}
%  u_1 = -s_j \qquad v_1 = \frac{B_r^{(j-1)}s_j}{s_j^{\top}B_r^{(j-1)}s_j} \qquad u_2 = (B_r^{(j-1)})^{-1} y_j \qquad v_2 = \frac{y_j}{y_j^{\top}s_j} .
%\end{equation*}
%\begin{align*}
%  & u_1 = -s_j & \qquad & v_1 = \frac{B_r^{(j-1)}s_j}{s_j^{\top}B_r^{(j-1)}s_j} \\
%  & u_2 = (B_r^{(j-1)})^{-1} y_j & \qquad &  v_2 = \frac{y_j}{y_j^{\top}s_j} .
%\end{align*}
See \citet[Lemma~7.6]{dennis1977quasi} for a proof, or simply note that \eqref{eq:det_rank_two_update} follows from two applications of the identity~$\det(A+uv^{\top})=(1+v^{\top}A^{-1}u)\det(A)$ when~$I+u_1v_1^{\top}$ is invertible and by continuity when it isn't. 
The third equality follows by multiplying the numerator and denominator by~$\|s_j\|^2$. 
The fourth relation follows from \eqref{eq:bound_s_sy} and from the fact that $s_j^{\top}B_r^{(j-1)}s_j \le \lambda_{\max}(B_r^{(j-1)})\|s_j\|^2$. 
The fifth relation uses the fact that the largest eigenvalue of a positive definite matrix is bounded by its trace. 
The sixth relation uses the previous bound on~$\tr(B_r^{(j-1)})$. 
Since

%%%%%%%%%%%

%Since
\begin{equation*}
  \det(B_r^{(0)}) = \left( \frac{\|y_r\|^2}{s_r^{\top}y_r} \right)^d \ge \lambda^d ,
\end{equation*}
it follows inductively that
\begin{equation*}
  \det(B_r) \ge \frac{\lambda^{d+M}}{((d+M)\Lambda)^M} .
\end{equation*}

\subsection{Proof of \lemref{lem:hess_approx_bounds}}  \label{sec:proof_of_lem_hess_approx_bounds}
Using \lemref{lem:trace_and_det_bounds} as well as the fact that~$H_r$ is positive definite, we have
\begin{equation*}
  \lambda_{\max}(B_r) \le \tr(B_r) \le (d+M)\Lambda .
\end{equation*}
and
\begin{equation*}
  \lambda_{\min}(B_r) \ge \frac{\det(B_r)}{\lambda_{\max}(B_r)^{d-1}} \ge \frac{\lambda^{d+M}}{((d+M)\Lambda)^{d+M-1}} .
\end{equation*}
Since we defined~$B_r=H_r^{-1}$, it follows that
\begin{equation*}
  \frac{1}{(d+M)\Lambda}I \preceq H_r \preceq \frac{((d+M)\Lambda)^{d+M-1}}{\lambda^{d+M}} I  .
\end{equation*}
%as desired. 

\subsection{Proof of \lemref{lem:bound_grad_var}} \label{sec:proof_of_bound_grad_var}
  Define the function~$g_{\mathcal S}(w)=f_{\mathcal S}(w) - f_{\mathcal S}(w_*) - \nabla f_{\mathcal S}(w_*)^{\top}(w-w_*)$ to get the linearization of~$f_{\mathcal S}$ around the optimum~$w_*$, and note that~$g_{\mathcal S}$ is minimized at~$w_*$. 
  It follows that for any~$w$, we have
  \begin{equation*}
    0 = g_{\mathcal S}(w_*) \le g_{\mathcal S}\left(w - \frac{1}{\Lambda} \nabla g_{\mathcal S}(w)\right) \le g_{\mathcal S}(w) - \frac{1}{2\Lambda} \|\nabla g_{\mathcal S}\|^2 .
  \end{equation*}
  Rearranging, we have
%  \begin{equation*}
%    \|\nabla f_{\mathcal S}(w) - \nabla f_{\mathcal S}(w_*)\|^2 \le 2\Lambda(f_{\mathcal S}(w) - f_{\mathcal S}(w_*) - \nabla f_{\mathcal S}(w_*)^{\top}(w-w_*)) .
%  \end{equation*}
  \begin{align*}
    & \,\, \|\nabla f_{\mathcal S}(w) - \nabla f_{\mathcal S}(w_*)\|^2 \\
    \le & \,\, 2\Lambda(f_{\mathcal S}(w) - f_{\mathcal S}(w_*) - \nabla f_{\mathcal S}(w_*)^{\top}(w-w_*)) .
  \end{align*}
  Averaging over all possible minibatches~$\mathcal S \subseteq \{1,\ldots,N\}$ of cardinality~$b$ and using the fact that~$\nabla f(w_*)=0$, we see that
%  \begin{equation} \label{eq:grad_norm_bound}
%    {N \choose b}^{-1} \sum_{|\mathcal S|=b} \|\nabla f_{\mathcal S}(w) - \nabla f_{\mathcal S}(w_*)\|^2 \le 2\Lambda(f(w) - f(w_*)) .
%  \end{equation}
  \begin{align} \label{eq:grad_norm_bound}
    & \,\, {N \choose b}^{-1} \sum_{|\mathcal S|=b} \|\nabla f_{\mathcal S}(w) - \nabla f_{\mathcal S}(w_*)\|^2 \\
    \le & \,\, 2\Lambda(f(w) - f(w_*)) .\nonumber 
  \end{align}
  Now, let~$\mu_{k}=\nabla f(w_{k})$ and~$v_{t}=\nabla f_{\mathcal S}(x_{t}) - \nabla f_{\mathcal S}(w_{k}) + \mu_{k}$. % be the variance-reduced stochastic gradient. 
  Conditioning on~$\mathcal F_{k,t}$ and taking an expectation with respect to~$\mathcal S$, we find
  \begin{align} \label{eq:bound_grad_var}
    %& \,\, \mathbb E_{k,t}[\|v_{t}\|^2] \\
    \mathbb E_{k,t}[\|v_{t}\|^2] \le & \,\, 2 \mathbb E_{k,t}[\|\nabla f_{\mathcal S}(x_{t}) - \nabla f_{\mathcal S}(w_*)\|^2]  \\
    & \quad + 2 \mathbb E_{k,t}[\|\nabla f_{\mathcal S}(w_{k}) - \nabla f_{\mathcal S}(w_*) - \mu_{k}\|^2] \nonumber \\
    \le & \,\, 2 \mathbb E_{k,t}[\|\nabla f_{\mathcal S}(x_{t}) - \nabla f_{\mathcal S}(w_*)\|^2]  \nonumber \\
    & \quad + 2 \mathbb E_{k,t}[\|\nabla f_{\mathcal S}(w_{k}) - \nabla f_{\mathcal S}(w_*)\|^2]  \nonumber \\
    \le & \,\, 4\Lambda(f(x_{t})-f(w_*) + f(w_{k})-f(w_*)) . \nonumber 
  \end{align}
  The first inequality uses the fact that~$\|a+b\|^2 \le 2\|a\|^2 + 2\|b\|^2$. 
  The second inequality follows by noting that~$\mu_{k} = \mathbb E_{k,t}[\nabla f_{\mathcal S}(w_{k}) - \nabla f_{\mathcal S}(w_*)]$ and that~$\mathbb E[\|\xi-\mathbb E[\xi]\|^2] \le \mathbb E[\|\xi\|^2]$ for any random variable~$\xi$. 
The third inequality follows from \eqref{eq:grad_norm_bound}.

\section{Discussion}
This paper introduces a stochastic version of L-BFGS and proves a linear rate of convergence in the strongly convex case. 
\thref{th:linear_convergence} captures the qualitatively linear rate of convergence of SLBFGS, which is reflected in our experimental results. 
We expect SLBFGS to outperform other stochastic first-order methods in poorly conditioned settings where curvature information is valuable as well in settings where we wish to solve the optimization problem to high precision. 

There are a number of interesting points to address in future work. 
The proof of \thref{th:linear_convergence} and many similar proofs used to analyze quasi-Newton methods result in constants that scale poorly with the problem size.
At a deeper level, the point of studying quasi-Newton methods is to devise algorithms that lie somewhere along the spectrum from gradient descent to Newton's method, reaping the computational benefits of gradient descent and the rapid convergence of Newton's method. 
Many of the proofs in the literature, including the proof of \thref{th:linear_convergence}, bound the extent to which the quasi-Newton method deviates from gradient descent by bounding the extent to which the inverse Hessian approximation deviates from the identity matrix. 
Those bounds are then used to show that the quasi-Newton method does not perform too much worse than gradient descent.
A future avenue of research is to study if stochastic quasi-Newton methods can be designed that provably exhibit superlinear convergence as has been done in the non-stochastic case.

%\subsubsection*{Acknowledgements}
%We thank Benjamin Recht for helpful discussions. 
%This work is supported in part by NSF grant number DGE-1106400. 
%This research is supported in part by NSF CISE Expeditions Award CCF-1139158, LBNL Award 7076018, and DARPA XData Award FA8750-12-2-0331, and gifts from Amazon Web Services, Google, SAP, The Thomas and Stacey Siebel Foundation, Adatao, Adobe, Apple, Blue Goji, Bosch, C3Energy, Cisco, Cray, Cloudera, EMC2, Ericsson, Facebook, Guavus, HP, Huawei, IBM, Informatica, Intel, Microsoft, NetApp, Pivotal, Samsung, Schlumberger, Splunk, Virdata and VMware.

{
\bibliographystyle{abbrvnat}
\bibliography{refs}
}

\end{document}